\newcommand{\R}{{\mathbb{R}}}
\newcommand{\E}{\mathrm{E}}
\renewcommand{\P}{\mathrm{P}}
\renewcommand{\d}{\mathrm{d}}
\newcommand{\e}{\mathrm{e}}
\DeclareMathOperator{\Cov}{\text{\rm Cov}}
\DeclarePairedDelimiter\floor{\lfloor}{\rfloor}
\title{Lower bound on spatial asymptotic of parabolic Anderson model with narrow wedge initial condition}
\author{ Fei  Pu
	}
\date{}                                           
\begin{document}
\newtheorem{stat}{Statement}[section]
\newtheorem{proposition}[stat]{Proposition}
\newtheorem*{prop}{Proposition}
\newtheorem{corollary}[stat]{Corollary}
\newtheorem{theorem}[stat]{Theorem}
\newtheorem{lemma}[stat]{Lemma}
\theoremstyle{definition}
\newtheorem{definition}[stat]{Definition}
\newtheorem*{cremark}{Remark}
\newtheorem{remark}[stat]{Remark}
\newtheorem*{OP}{Open Problem}
\newtheorem{example}[stat]{Example}
\newtheorem{nota}[stat]{Notation}
\numberwithin{equation}{section}
\maketitle

\begin{abstract}
Let $\{u(t\,,x): (t,x)\in (0, \infty)\times \R\}$ be the solution to parabolic Anderson model with narrow wedge initial condition. Using the association property of parabolic Anderson model, we establish a lower bound on  spatial asymptotic of the solution: 
          \begin{align*}
          \liminf_{R\to\infty}\frac{ \max_{|x|\leq R}\left(\log u(t\,,x) + \frac{x^2}{2t}\right)}{(\log R)^{2/3}} \geq \frac14\left(\frac{t}2\right)^{1/3}, \quad \text{a.s.}
          \end{align*}

          \end{abstract}

\bigskip\bigskip

\noindent{\it \noindent MSC 2010 subject classification: 60G15, 60F10, 60H15, 60G60}
 \\
\noindent{\it Keywords: parabolic Anderson model, spatial asymptotic, association}


{
}

\section{Introduction}\label{sec:int}

Consider the parabolic Anderson model with narrow wedge initial condition
\begin{align}\label{PAM}
\begin{cases}
\partial_t u(t\,,x) = \frac12\partial_x^2 u(t\,,x) + u(t\,,x)\,\xi(t\,,x), \\
u(0)=\delta_0,
\end{cases}
\end{align}
where $\xi$ denotes the space-time white noise. Conus et al \cite{CJK13}  studied the growth of the tallest peaks of solution to stochastic heat equation. For the parabolic Anderson model with flat initial data, i.e., $u(0)\equiv1$, they showed that there exist positive constants $c_1$ and $c_2$ such that almost surely
\begin{align}\label{eq:asym}
c_1\leq \liminf_{R\to\infty}\frac{\max_{|x|\leq R}\log u(t\,,x)}{(\log R)^{2/3}}\leq \limsup_{R\to\infty}\frac{\max_{|x|\leq R}\log u(t\,,x)}{(\log R)^{2/3}} \leq c_2, 
\end{align}
(see \cite[Theorem 1.3]{CJK13}). Later on, Chen \cite{Che16} obtained the precise asymptotic form, that is, 
\begin{align*}
\lim_{R\to\infty}\frac{\max_{|x|\leq R}\log u(t\,,x)}{(\log R)^{2/3}}=\frac34\left(\frac{2t}{3}\right)^{1/3}, \quad \text{a.s.}
\end{align*}
(see \cite[Corollary 1.5]{Che16}). 

Huang and L\^e \cite{HuL19} have investigated the spatial asymptotic of the stochastic heat equation with compactly supported initial data. In the case of narrow wedge initial data, i.e., $u(0)=\delta_0$, we see from \cite[Theorem 1.4]{HuL19} that
\begin{align*}
\limsup_{R\to\infty}\frac{ \max_{|x|\leq R}\left(\log u(t\,,x) + \frac{x^2}{2t}\right)}{(\log R)^{2/3}}\leq \frac34\left(\frac{2t}{3}\right)^{1/3}, \quad \text{a.s.}
\end{align*}
A lower bound in \cite[Theorem 1.4]{HuL19} is missing and it is conjectured in Huang and L\^e \cite{HuL19} (see \cite[Conjecture 1.2 and (1.18)]{HuL19}) that, in the case of narrow wedge initial data,
\begin{align*}
\lim_{R\to\infty}\frac{ \max_{|x|\leq R}\left(\log u(t\,,x) + \frac{x^2}{2t}\right)}{(\log R)^{2/3}}= \frac34\left(\frac{2t}{3}\right)^{1/3}, \quad \text{a.s.}
\end{align*}

For the flat initial data,  Conus et al \cite{CJK13} established the lower bound in \eqref{eq:asym} using a localization argument. In the situation of the narrow wedge initial data, this localization procedure does not apply because of some technical difficulty. We refer to \cite[Section 6.2]{HuL19} for detailed explanation on this.  As an attempt to obtain the exact spatial asymptotics for narrow wedge initial data, Huang and L\^e \cite{HuL19} consider a sequence of approximating solution and use the Gaussian nature of the noise to give some estimate on the spatial asymptotic of this approximating solution; see \cite[Theorem 1.5]{HuL19}.

The goal of this paper is to establish a lower bound on the spatial asymptotic of the parabolic Anderson model with narrow wedge initial data. We have the following. 
\begin{theorem}\label{th:main}
          Let $\{u(t\,,x): (t,x)\in \R_+\times \R\}$ be the solution to parabolic Anderson model \eqref{PAM}. For fixed $t>0$, we have almost surely, 
          \begin{align}\label{eq:lower}
          \liminf_{R\to\infty}\frac{ \max_{|x|\leq R}\left(\log u(t\,,x) + \frac{x^2}{2t}\right)}{(\log R)^{2/3}} \geq \frac14\left(\frac{t}2\right)^{1/3}.
          \end{align}
\end{theorem}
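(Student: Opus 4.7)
The target is a liminf lower bound, so I reduce it by Borel--Cantelli along $R_n = 2^n$. Writing $Y_x := \log u(t,x) + \tfrac{x^2}{2t}$, it suffices to show that for every $\alpha < \tfrac14 (t/2)^{1/3}$,
\begin{equation*}
\sum_{n\geq 1} P\!\left( \max_{|x|\leq R_n} Y_x \ <\ \alpha (\log R_n)^{2/3} \right) < \infty.
\end{equation*}
The two ingredients are a quantitative one-point lower tail for $Y_x$, and the positive association of the field $\{u(t,x)\}_{x\in\R}$.

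\textbf{Step 1 (one-point lower tail).} I would compute the positive integer moments of $u(t,x)$ via the Feynman--Kac / delta-Bose-gas representation. Passing to $v(t,x):=\sqrt{2\pi t}\,e^{x^2/(2t)}u(t,x)$ removes the Gaussian decay and produces moments independent of $x$, of order $\exp(n^3 t/24)$ for large $n$. Applying Paley--Zygmund to $v(t,x)^n$ at an integer $n\asymp \lambda^{1/2}/t^{1/2}$ then gives, for $\lambda$ in an appropriate range,
\begin{equation*}
P(Y_x \geq \lambda)\ \geq\ c_1\exp\!\bigl(-c_2 \lambda^{3/2}/t^{1/2}\bigr),
\end{equation*}
with $c_2$ calibrated so that the threshold $\lambda_n = \alpha(\log R_n)^{2/3}$ yields a per-site success probability at least $R_n^{-\beta}$ for some $\beta<1$. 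The precise coefficient of $n^3$ in the large-$n$ asymptotics of the log-moments is what ultimately fixes the target constant $\tfrac14(t/2)^{1/3}$.

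\textbf{Step 2 (from one-point tail to the maximum via association).} Discretize $[-R_n,R_n]$ into $N\asymp R_n$ sites $x_1,\dots,x_N$, and set $A_j := \{Y_{x_j}\geq\lambda_n\}$. Each $A_j$ is an event increasing in the driving white noise $\xi$, since $u(t,\cdot)$ is a coordinate-wise monotone functional of $\xi$ (visible either from the non-negative Wiener chaos expansion or from the Picard iteration with positive kernels); hence the family $\{A_j\}$ is positively associated. I then apply a Paley--Zygmund argument to the counting variable $N_\lambda := \sum_j\mathbf{1}_{A_j}$: association gives $\mathrm{Cov}(\mathbf{1}_{A_i},\mathbf{1}_{A_j})\geq 0$ automatically, and this is combined with an upper bound on the pair probability $P(A_i\cap A_j)$ coming from the two-point moment formula and the exponential spatial decay of the two-body partition function. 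This controls $E[N_\lambda^2]$ from above and turns the single-site tail of Step~1 into $P(N_\lambda = 0)$ summable in $n$; Borel--Cantelli along $R_n=2^n$ closes the argument.

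\textbf{Main obstacle.} The delicate point is exactly the way association enters: on its own, association only gives $P(A_i\cap A_j)\geq P(A_i)P(A_j)$, the \emph{opposite} direction of what a second-moment lower bound on the maximum requires. Association must therefore be paired with an explicit upper estimate on the two-site joint probability — the technical heart of the argument — and getting the spacing of the $x_j$, the threshold $\lambda_n$, and the moment index $n$ simultaneously optimal is what produces the explicit constant $\tfrac14(t/2)^{1/3}$; any loss along the way would show up as a worse constant.
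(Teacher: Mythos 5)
Your overall architecture (one-point lower tail at level $(\log R)^{2/3}$ plus a decorrelation argument over a spatial grid, Borel--Cantelli along $R_n=2^n$) matches the paper's, but the two load-bearing steps are not actually supplied, and both gaps are fatal for the stated constant. The crux is your Step 2. You correctly observe that association gives $\P(A_i\cap A_j)\ge \P(A_i)\P(A_j)$, i.e.\ the wrong direction, but you then defer the needed \emph{upper} bound on $\P(A_i\cap A_j)-\P(A_i)\P(A_j)$ to "the two-point moment formula and the exponential spatial decay of the two-body partition function." No such derivation is given, and it is not a routine consequence of moment formulas: at thresholds $\lambda\asymp(\log R)^{2/3}$ the events have probability $R^{-\beta}$, and two-point moments of any fixed order produce additive errors that are enormous compared with $\P(A_i)\P(A_j)\approx R^{-2\beta}$ (joint high moments $\E[v(t,x)^n v(t,y)^n]$ do not factorize in the relevant regime). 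This missing estimate is exactly the hard point of the problem; the paper's substitute is a Newman-type inequality for associated variables with \emph{bounded densities} (which forces the Malliavin-calculus work of Section 3, Proposition \ref{prop:density} and Corollary \ref{cor:density2}), combined with the covariance decay $\Cov(\log U(t,x),\log U(t,0))\lesssim 1/|x|$ from the Poincar\'e inequality (Lemma \ref{lem:cov:decay}). That inequality only yields decorrelation of order $[\Cov]^{1/3}\lesssim|x_j-x_k|^{-1/3}$, which is why the paper must use a sparse grid of $\lfloor R^a\rfloor$ points with $a<\tfrac16$; your scheme with $N\asymp R$ sites and "any $\beta<1$" would, if it worked, prove the full Huang--L\^e conjecture with constant $\tfrac34(2t/3)^{1/3}$, which is precisely what is open — a sign that the decisive estimate is being assumed rather than proved.

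Step 1 has a quantitative gap as well: a Paley--Zygmund argument from moments of order $\exp(cn^3 t)$ cannot be "calibrated" to the sharp lower-tail constant; comparing the $n$-th and $2n$-th moments loses an exponential factor, so you obtain $\P(Y_x\ge\lambda)\ge \exp(-c_2\lambda^{3/2}/\sqrt t)$ only with a $c_2$ far larger than the sharp value $\tfrac43\sqrt{2/t}$, and hence a final constant strictly smaller than $\tfrac14(t/2)^{1/3}$. The paper instead imports the sharp upper-tail asymptotics of Ganguly--Hegde (transferred by scaling to \eqref{eq:tail1} and \eqref{eq:tail1}--\eqref{eq:tail1} style, see \eqref{eq:tail1}), and the stated constant arises from balancing that sharp exponent against the $a<\tfrac16$ restriction imposed by the association/covariance inequality — not, as you suggest, from the coefficient of $n^3$ in the moment asymptotics. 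To repair your proposal you would need (i) the sharp one-point tail as an input rather than Paley--Zygmund, and (ii) a proved, quantitative pair-decorrelation bound at the level of the events, e.g.\ the bounded-density-plus-association inequality of Corollary \ref{cor:density}, after which your second-moment counting argument collapses to essentially the paper's computation with the same loss and the same constant.
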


Conus et al \cite{CJK13} introduced a coupled process to approximate the solution to stochastic heat equation. This localization argument implies that the random variables $u(t\,,x)$ and $u(t\,,y)$ are approximately independent when the
spatial variables $x$ and $y$ are far away, where $u$ denotes the solution to parabolic Anderson model with flat initial condition. Combined with the asymptotic behavior of the tail probability of the solution and Borel-Cantelli's lemma, it leads 
to the lower bound in \eqref{eq:asym}. Our approach to the independence structure of the solution to parabolic Anderson model \eqref{PAM} relies on the association property of the solution, which has been proved in \cite{CKNP23}.  The association property and boundedness of the density of parabolic Anderson model (see Proposition \ref{prop:density} and
Corollary \ref{cor:density2} below) together lead to  a crucial probability inequality \eqref{eq:prob}, which implies that the independence structure of the solution can be characterized by the asymptotic behavior of the covariance of the solution, and hence enables us to study the lower bound on spatial asymptotic of parabolic Anderson model with narrow wedge initial condition. 

We will introduce some basic facts on Malliavin calculus and association in Section \ref{sec:pre}. The boundedness of density is proved in Section \ref{sec:bound}. Finally, we prove Theorem \ref{th:main} in Section \ref{sec:main}.

Let us close the Introduction with a brief description of the notation of this paper.
For every $Z\in L^k(\Omega)$, we write $\|Z\|_k$ instead of
$(\E[|Z|^k])^{1/k}$.  Throughout we write ``$g_1(x)\lesssim g_2(x)$ for all $x\in X$'' when
there exists a real number $L$ such that $g_1(x)\le Lg_2(x)$ for all $x\in X$.
Alternatively, we might write ``$g_2(x)\gtrsim g_1(x)$ for all $x\in X$.'' By
``$g_1(x)\asymp g_2(x)$ for all $x\in X$'' we mean that $g_1(x)\lesssim g_2(x)$
for all $x\in X$ and $g_2(x)\lesssim g_1(x)$ for all $x\in X$.

\section{Preliminaries} \label{sec:pre}

Following Walsh \cite{Wal86}, we interpret the stochastic PDE  in
the following mild form:
\begin{align}\label{mild3}
	u(t\, , x) = \bm{p}_{t}(x) + \int_0^t\int_\R\bm{p}_{t - s}(x - y)u(s\,, y)\,\xi(\d s\,\d y),
\end{align}
where
\[
	\bm{p}_{t}(x) = \frac{1}{\sqrt{2\pi t}}\,\e^{-x^2/(2t)}
	\qquad\text{for all }t > 0\text{ and }x \in \R.
\]
Let
\begin{equation} \label{U}
	U(t\,,x):= \frac{u(t\,,x)}{\bm{p}_t(x)}
	\qquad\text{for all $t>0$ and $x\in\R$}.
\end{equation}
By \cite[Lemma A.4]{CKNP22}, $\lim_{t\downarrow0}U(t\,,x)=1$ in $L^k(\Omega)$
for all $x\in\R$ and $k\ge 2$.
Therefore, we also define
\[
	U(0\,,x) :=1\qquad\text{for all $x\in\R$},
\]
throughout.
Amir et al \cite[Proposition 1.4]{ACQ11} showed
that the process $U(t):=\{U(t\,,x)\}_{x\in\R}$ is stationary for every $t>0$.
The formulation \eqref{mild3} of the stochastic PDE \eqref{PAM}
can be recast equivalently in terms of $U$ as follows:
\[
	U(t\,,x) = 1 + \int_0^t\int_\R\frac{\bm{p}_{t - s}(x - y) \bm{p}_s(y)}{\bm{p}_t(x)}\,
	U(s\,,y)\,\xi(\d s\,\d y).
\]
Because
\begin{equation}\label{PPPP}
	\frac{\bm{p}_{t-s}(a)\bm{p}_s(b)}{\bm{p}_t(a+b)} =
	\bm{p}_{s(t-s)/t}\left( b - \frac st (a+b)\right)
	\quad\text{for all $0<s<t$ and $a,b\in\R$}, 
	\end{equation}
equation \eqref{mild3} can be recast as the following random evolution equation for $U$:
\begin{align}\label{E:U}
	U(t\,,x) = 1 +\int_0^t\int_\R
	 \bm{p}_{s(t-s)/t}\left(y- \frac{s}{t}x \right)U(s\,,y)\,
	\xi(\d s\, \d y).
\end{align}

 Chen and Dalang \cite[Theorem 2.4]{ChD15} found
	that there exists a real number $c_{T, k} > 0$ such that for all $(t\,, x)
	\in (0\,, T] \times \R$ and $k\geq 2$,
	\begin{equation*}
            \|u(t\,, x)\|_k \leq c_{T, k}\, \bm{p}_t(x),
	\end{equation*}
	which implies that 
	\begin{align}\label{moment}
	\sup_{(t,x)\in [0, T]\times \R}\|U(t\,,x)\|_k \leq c_{T, k}.
	\end{align}

\subsection{Malliavin calculus}

Let $\mathcal{H}= L^2(\R_+ \times \R)$.
The Gaussian family $\{ W(h)\}_{h \in \mathcal{H}}$ formed by the Wiener integrals
\[
	W(h)= \int_0^\infty\int_\R  h(s\,,y)\, \xi(\d s\, \d y)
\]
defines an  {\it isonormal Gaussian process} on the Hilbert space $\mathcal{H}$.
In this framework we can develop the Malliavin calculus (see Nualart \cite{Nua06}).
We denote by $D$ the derivative operator.  Using Clark-Ocone formula  (see Chen et al \cite[Proposition 6.3]{CKNP21}), we can derive the following Poincar\'e-type inequality
\begin{equation} \label{Poincare:Cov}
	|\Cov(F\,, G)| \le \int_0^{\infty}\d s \int_\R\d y\
	\left\| D_{s,y } F  \right\|_2
	\left\| D_{s,y}G \right\|_2
	\qquad\text{for all $F,G\in\mathbb{D}^{1,2}$.}
\end{equation}
(see \cite[(2.1]{CKNP22}).

The divergence operator $\delta$ is defined as the adjoint of the derivative operator $D$ as an unbounded operator from $L^1(\Omega)$ to $L^1(\Omega; \mathcal{H})$. Denote by ${\rm Dom}\, \delta$
the domain of $\delta$.
For an $\mathcal{H}$-valued random variable $v:\Omega\to \mathcal{H}$ and $F\in \mathbb{D}^{1,1}$, we write
\begin{align*}
D_vF=\langle DF\,, v\rangle_{\mathcal{H}}.
\end{align*}
The following criterion on the boundedness of the density of random variables was proved in \cite[Proposition 1]{CFN98};
see also \cite[Proposition 3.1]{KuN22}.

\begin{proposition}\label{prop:bound}
          Let $F\in \mathbb{D}^{1,1}$ and $v\in L^1(\Omega; \mathcal{H})$ be such that $D_vF \neq 0$ a.s. Assume that $v/D_vF\in {\rm Dom}\, \delta$. Then the 
          law of $F$ has a continuous and bounded density given by
          \begin{align*}
          f_F(a)=\E\left[\bm{1}_{\{F>a\}}\delta\left(\frac{v}{D_vF}\right)\right], \quad a\in \R
          \end{align*} 
\end{proposition}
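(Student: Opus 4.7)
The proof is the classical integration-by-parts argument in Malliavin calculus. The plan is to verify, for every test function $\varphi \in C_c^\infty(\R)$, the identity
\[
\E[\varphi(F)] = \int_\R \varphi(a)\, \E\!\left[\bm{1}_{\{F>a\}}\,\delta\!\left(v/D_vF\right)\right]\,\d a,
\]
which, combined with density of $C_c^\infty(\R)$ in $C_b(\R)$, identifies the claimed expression as the density of $F$ and simultaneously yields its continuity and boundedness.

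First I would fix $\varphi\in C_c^\infty(\R)$ and consider its antiderivative $\Phi(x):=\int_{-\infty}^x \varphi(a)\,\d a$, which is smooth, bounded, with $\Phi'=\varphi$. The chain rule for the Malliavin derivative gives $D\Phi(F)=\varphi(F)\,DF$, so the defining identity $\langle DF,v\rangle_{\HH}=D_vF$ produces the crucial algebraic cancellation
\[
\left\langle D\Phi(F),\, v/D_vF\right\rangle_{\HH} \;=\; \varphi(F)\,\frac{\langle DF,v\rangle_{\HH}}{D_vF} \;=\; \varphi(F).
\]
Taking expectations and invoking the duality $\E[\langle DG,u\rangle_{\HH}]=\E[G\,\delta(u)]$ with $G=\Phi(F)$ and $u=v/D_vF\in\mathrm{Dom}\,\delta$, I obtain the core integration-by-parts formula $\E[\varphi(F)]=\E[\Phi(F)\,\delta(v/D_vF)]$. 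Rewriting $\Phi(F)=\int_\R \bm{1}_{\{F>a\}}\varphi(a)\,\d a$ and applying Fubini then yields the displayed identity, with $f_F(a):=\E[\bm{1}_{\{F>a\}}\delta(v/D_vF)]$ playing the role of the density. Boundedness is immediate from $|f_F(a)|\leq \|\delta(v/D_vF)\|_1$, and continuity follows by dominated convergence once absolute continuity of the law of $F$ (just proved) guarantees $\P(F=a)=0$ for every $a\in\R$.

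The main obstacle, and the reason the result is not a one-line consequence of the classical $L^2$ theory, is working under the low-regularity hypotheses $F\in\mathbb{D}^{1,1}$ and $v\in L^1(\Omega;\HH)$ rather than in the textbook $\mathbb{D}^{1,2}$ setting. In particular, the duality $\E[\Phi(F)\delta(u)]=\E[\langle D\Phi(F),u\rangle_{\HH}]$ and the chain rule must be justified in the $L^1$ framework: one approach is to truncate $\varphi$ and $v/D_vF$ so that the standard $L^2$ duality applies, and then pass to the limit using the hypothesis $v/D_vF\in\mathrm{Dom}\,\delta$ (which by definition is the $L^1$-adjoint of $D$) together with the integrability of $F$ and of $\delta(v/D_vF)$. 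Once this technical passage is secured, the computation above goes through verbatim.
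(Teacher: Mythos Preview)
The paper does not prove this proposition: it is quoted verbatim from \cite[Proposition 1]{CFN98} (see also \cite[Proposition 3.1]{KuN22}) and used as a black box, so there is no in-paper argument to compare against.

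Your sketch is the standard integration-by-parts proof and is essentially the one given in those references, so it is correct in substance. Two minor comments. First, the phrase ``density of $C_c^\infty(\R)$ in $C_b(\R)$'' is not accurate in the sup-norm; what you actually need is only that testing the law of $F$ against every $\varphi\in C_c^\infty(\R)$ identifies it, which is elementary. Second, you correctly isolate the only genuine technical point: the duality $\E[\Phi(F)\,\delta(u)]=\E[\langle D\Phi(F),u\rangle_{\mathcal{H}}]$ must be justified with $\Phi(F)\in\mathbb{D}^{1,1}$ and $u=v/D_vF\in\mathrm{Dom}\,\delta$, where in this paper $\delta$ is defined as the adjoint of $D:L^1(\Omega)\to L^1(\Omega;\mathcal{H})$; once that is in place (as in \cite{CFN98}), the rest of your computation goes through exactly as written.
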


\begin{remark}\label{rem:con}
          We have the following sufficient conditions for $v/D_vF\in {\rm Dom}\, \delta$ in Proposition \ref{prop:bound}:
          \begin{itemize}
          \item [(i)]$F\in\mathbb{D}^{2,p}$ and $(D_vF)^{-1}\in L^q(\Omega)$,
          \item  [(ii)] $v\in \mathbb{D}^{1,p}(\mathcal{H})$,
          \end{itemize}
          for some $p\geq 6$ and $q\geq 4$; see the discussions below Proposition 3.1 of \cite{KuN22}.
\end{remark}

According to Chen et al \cite[Proposition 5.1]{CHN21}, $u(t\,, x)\in\bigcap_{k\ge 2}\mathbb{D}^{1, k}$
	for all   $t>0$ and $x\in\R$. Moreover, by \cite[Lemma 2.1]{CKNP22}, for every $T >0$ and $k \geq 2$, there exists a real number $C_{T, k} >0$
	such that for $t\in(0\,,T]$ and $x \in \R$, and for every $(s\,, y)
	\in (0\,, t) \times \R$,
	\begin{align*}
		\|D_{s, y}u(t\,,x)\|_k \leq C_{T, k}\,\bm{p}_{t - s}(x - y)\bm{p}_s(y),
	\end{align*}
	which implies that 
	\begin{align}\label{derivative}
		\|D_{s, y}U(t\,,x)\|_k \leq C_{T, k}\,\bm{p}_{s(t-s)/t}\left(y- \frac{s}{t}x \right)
	\end{align}
	by \eqref{U} and \eqref{PPPP}.

\subsection{Association}
We recall the association property of solution to parabolic Anderson model. We refer to \cite{PR12} for more details on 
associated random variables. First, let us recall the definition of association; see Esary et al \cite{EPW67}.
A random vector $X:=(X_1\,,\ldots,X_m)$ is said to be \emph{associated} if
	\begin{equation}\label{E:assoc}
		\Cov[h_1(X)\,,h_2(X)]\ge0,
	\end{equation}
for every pair of functions $h_1,h_2:\R^m\to\R$
that are nondecreasing in every coordinate and satisfy $h_1(X),h_2(X)\in L^2(\Omega)$. A random field $\Phi=\{\Phi(x)\}_{x\in\R^d}$ is
	\emph{associated} if $(\Phi(x_1)\,,\ldots,\Phi(x_m))$ is associated
	for every $x_1,\ldots,x_m\in\R^d$.
	
Chen et al \cite{CKNP23} established the association property of solution to stochastic heat equation; see \cite[Theorem A.4]{CKNP23}. In particular, it implies that the solution $\{u(t\,,x): (t,x)\in (0, \infty)\times \R\}$ to \eqref{PAM} is associated. From the definition of association, we deduce the following. 
\begin{lemma}\label{lem:ass}
          The process $\{\log U(t\,,x): (t,x)\in (0, \infty)\times \R\}$ is associated. 
\end{lemma}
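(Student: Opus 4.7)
The plan is to deduce the association of $\{\log U(t,x)\}$ directly from the association of $\{u(t,x)\}$ (which is granted by \cite[Theorem A.4]{CKNP23}) via the standard stability property of associated families under coordinate-wise monotone transformations.

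The key algebraic observation is that, by the definition \eqref{U},
\[
\log U(t,x) = \log u(t,x) - \log \bm{p}_t(x),
\]
so for each fixed $(t,x)$ the map $y\mapsto \log(y/\bm{p}_t(x))$ sending $u(t,x)$ to $\log U(t,x)$ is a strictly increasing function on $(0,\infty)$. Thus $\log U(t,x)$ is a nondecreasing (indeed, coordinate-separable) function of $u(t,x)$ alone.

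The recipe is then as follows. First, fix arbitrary points $(t_1,x_1),\ldots,(t_m,x_m) \in (0,\infty)\times \R$. By \cite[Theorem A.4]{CKNP23} the vector $(u(t_1,x_1),\ldots,u(t_m,x_m))$ is associated in the sense of \eqref{E:assoc}. Second, invoke Mueller's strict positivity theorem to ensure that $u(t_i,x_i)>0$ almost surely, so that $\log U(t_i,x_i)$ is well defined as a finite random variable. Third, apply the classical fact (see, e.g., \cite{EPW67} or \cite{PR12}) that if $(X_1,\ldots,X_m)$ is associated and $f_1,\ldots,f_m:\R\to\R$ are each nondecreasing, then $(f_1(X_1),\ldots,f_m(X_m))$ is also associated. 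Taking $f_i(y)=\log y - \log \bm{p}_{t_i}(x_i)$ on $(0,\infty)$ yields that $(\log U(t_1,x_1),\ldots,\log U(t_m,x_m))$ is associated. Since the finite points were arbitrary, this is by definition the association of the random field $\{\log U(t,x)\}$.

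There is essentially no obstacle in this argument: it is a bookkeeping exercise reducing the claim to two inputs, the positivity of $u$ and the deterministic monotone-transformation stability of association. The only small technical point worth mentioning explicitly is the almost sure positivity of $u(t,x)$, which is needed so that the transformation $y\mapsto \log y$ is applied on its domain; this is standard for the stochastic heat equation with non-negative initial data.
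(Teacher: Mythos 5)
Your proof is correct and follows exactly the route the paper intends: the paper deduces the lemma ``from the definition of association'' after citing \cite[Theorem A.4]{CKNP23}, which is precisely your argument that $\log U(t,x)=\log u(t,x)-\log \bm{p}_t(x)$ is a coordinate-wise nondecreasing transformation of the associated field $u$ (using strict positivity of $u$ so the logarithm is defined). Nothing further is needed.
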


\section{Boundedness of density}\label{sec:bound}

The solution $u(t\,,x)$ to equation \eqref{PAM} is strictly positive (see \cite{Mue97} and \cite{MoF14}) and has a smooth probability density function on $(0, \infty)$ (see \cite{CHN21}). In this section, 
we show that this probability density function is bounded. 
\begin{proposition}\label{prop:density}
          For fixed $t>0$, the random variable $U(t\,,0)$ has a  continuous and bounded probability density function given by
          \begin{align*}
          f_{U(t\,,0)}(a)= \E\left[\bm{1}_{\{U(t\,,0)>a\}}\delta\left(\frac{v}{D_vU(t\,,0)}\right)\right], \quad a\in (0, \infty)
          \end{align*}
          where $v$ is chosen as
           \begin{align*}
           v(r\,,z) = \bm{1}_{\{0<r<t\}} \bm{p}_{r(t-r)/t}\left(z\right)U(r\,,z), \quad \text{for $(r\,,z)\in (0, \infty)\times \R$}.
           \end{align*}
\end{proposition}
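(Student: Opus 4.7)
The plan is to apply Proposition \ref{prop:bound} with $F=U(t\,,0)$ and the specific direction $v(r,z)=\bm{1}_{\{0<r<t\}}\,\bm{p}_{r(t-r)/t}(z)\,U(r\,,z)$. The density formula in the statement is exactly the conclusion of Proposition \ref{prop:bound}, so the whole task reduces to verifying its hypotheses, together with the sufficient conditions in Remark \ref{rem:con} for the key requirement $v/D_vF\in{\rm Dom}\,\delta$. Concretely, I need to establish (i) $F\in\mathbb{D}^{2,p}$ for some $p\ge 6$, (ii) $v\in\mathbb{D}^{1,p}(\mathcal{H})$, and (iii) $D_vF>0$ a.s.\ with $(D_vF)^{-1}\in L^q(\Omega)$ for some $q\ge 4$.

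Conditions (i) and (ii) are fairly routine. The moment bound \eqref{moment}, together with \eqref{derivative} and the elementary identity $\int_\R \bm{p}_{r(t-r)/t}(z)^2\,\d z=\tfrac12(\pi r(t-r)/t)^{-1/2}$ -- which is integrable over $r\in(0,t)$ -- immediately delivers $v\in L^p(\Omega;\mathcal{H})$ for every $p\ge 1$. A second differentiation of the mild evolution equation \eqref{E:U}, combined with a Picard iteration along the lines of \cite{CHN21,CKNP22}, promotes the known $\mathbb{D}^{1,p}$ regularity of $U(t\,,x)$ to $\mathbb{D}^{2,p}$ uniformly on compact sets; the same scheme applied to $v$ yields (ii).

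The real obstacle is (iii). Observing from \eqref{E:U} that $F-1=\delta(v)$ and applying the commutation relation
\[
D_{r,z}\delta(v)=v(r,z)+\int_r^t\!\!\int_\R \bm{p}_{s(t-s)/t}(y)\,D_{r,z}U(s\,,y)\,\xi(\d s\,\d y),
\]
one obtains the decomposition
\begin{align*}
D_vF=\int_0^t\!\!\int_\R \bigl[\bm{p}_{r(t-r)/t}(z)\,U(r\,,z)\bigr]^2\,\d z\,\d r\;+\;R,
\end{align*}
where the remainder $R$ has mean zero. The leading integral is strictly positive a.s.\ because $U>0$ a.s.\ by \cite{Mue97,MoF14}. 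My plan is to bound this leading term below on an event of arbitrarily high probability by invoking the small-ball / negative-moment estimates for $U$ available in the literature (see \cite{MoF14} and the discussion in \cite{CKNP23}), while simultaneously controlling $\|R\|_q$ via the Burkholder-Davis-Gundy inequality combined with \eqref{derivative} and \eqref{moment}. Putting these two estimates together should show that $\P\{D_vF<\varepsilon\}$ decays faster than any polynomial as $\varepsilon\downarrow 0$, which is more than enough to conclude $(D_vF)^{-1}\in L^q(\Omega)$ for any prescribed $q\ge 4$.

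Once (i)-(iii) are in hand, Proposition \ref{prop:bound} combined with Remark \ref{rem:con} simultaneously furnishes the continuity and boundedness of the density of $U(t\,,0)$ and the stated Skorohod-integral representation, completing the proof.
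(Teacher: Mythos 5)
Your overall framework (apply Proposition \ref{prop:bound} with this choice of $v$, and verify conditions (i)--(ii) of Remark \ref{rem:con} plus negative moments of $D_vU(t\,,0)$) is the same as the paper's, and your treatment of $v\in\mathbb{D}^{1,p}(\mathcal{H})$ and $U(t\,,0)\in\mathbb{D}^{2,p}$ is in line with what is done there. The genuine gap is in step (iii), which is the heart of the matter. You decompose over the \emph{full} interval,
\begin{align*}
D_vU(t\,,0)=\int_0^t\!\!\int_\R \bigl[\bm{p}_{r(t-r)/t}(z)\,U(r\,,z)\bigr]^2\,\d z\,\d r+R,
\end{align*}
and propose to get $\P\{D_vU(t\,,0)<\varepsilon\}=o(\varepsilon^N)$ by bounding the leading term below on a high-probability event and controlling $\|R\|_q$. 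This cannot produce any rate in $\varepsilon$: with the window fixed, $\|R\|_q$ is an order-one constant (it does not shrink with $\varepsilon$), so the best available bound is of the form $\P\{\text{lead}<c_\delta\}+\P\{|R|>c_\delta-\varepsilon\}$, and both terms are constants independent of $\varepsilon$; letting $c_\delta\downarrow 0$ only makes the Chebyshev bound $c_\delta^{-q}\|R\|_q^q$ blow up. Since $R$ is mean zero but can be negative and comparable in size to the leading term, "leading term positive a.s.\ $+$ $R$ has finite moments'' does not even give $(D_vU(t\,,0))^{-1}\in L^q(\Omega)$, let alone super-polynomial decay.

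The missing idea is the $\varepsilon$-dependent time localization used in the paper (following Kuzgun--Nualart): using the a.s.\ nonnegativity of $D_{r,z}U(t\,,0)$ one discards all of $[0,t-\epsilon^\alpha]$ and keeps only the window $[t-\epsilon^\alpha,t]$ with $\alpha\in(\tfrac43,2)$. Then the deterministic mass of the quadratic term is $m(\epsilon)\asymp\epsilon^{\alpha/2}\gg\epsilon$, so $\P\{I_1<2\epsilon\}\lesssim\epsilon^{(1-\alpha/2)q}$ by Chebyshev, Jensen and the negative moments of $U$, while the stochastic remainder over the short window satisfies $\|I_2\|_q\lesssim\epsilon^{3\alpha/4}\ll\epsilon$, whence $\P\{|I_2|>\epsilon\}\lesssim\epsilon^{(\frac34\alpha-1)q}$ (see \eqref{eq:I1} and \eqref{eq:I2}); choosing $\alpha=\tfrac85$ gives \eqref{smallprob} and hence negative moments of all orders. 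Without tying the window length to $\varepsilon$ in this way, your small-ball argument for $D_vU(t\,,0)$ does not close, so as written the proof of the key hypothesis $(D_vU(t\,,0))^{-1}\in L^q(\Omega)$ is incomplete. (A further minor point: your assertion that $D_vU(t\,,0)>0$ a.s.\ because the leading integral is positive also implicitly needs the nonnegativity of the Malliavin derivative, or the negative-moment estimate itself.)
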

\begin{proof}
            We will apply Proposition \ref{prop:bound} to prove this result. 
                       It is clear that the random variable $U(t\,,0)$ belongs to $\mathbb{D}^{1,1}$.
                                 From the choice of $v$, we have $v\in L^1(\Omega; \mathcal{H})$. Moreover, since the Malliavin derivative $D_{r,z}u(t\,,x)$ is nonnegative almost surely (see \cite[(A.4)]{CKNP23}) and
           $v(r\,,z)$ is strictly positive, 
           we see that
           \begin{align*}
           D_vU(t\,,0)= \int_0^t \int_\R v(r\,, z)D_{r,z} U(t\,,0)\d z \d r
           \end{align*}
           is strictly
           positive almost surely. According to Proposition \ref{prop:bound}, we need show that $v/D_vF\in {\rm Dom}\, \delta$, which reduces to show that the conditions (i) and (ii)
           in Remark \ref{rem:con} are satisfied. In order to check condition (ii) of Remark \ref{rem:con}, we use Minkowski's inequality to see that for all $p\geq 6$,
           \begin{align*}
           \left(\E[\|Dv\|^p_{\mathcal{H}^{\otimes2}}]\right)^{\frac2p}& = \left\| \int_0^t\d r\int_\R\d z\int_0^r\d r_1\int_\R\d z_1\, \bm{p}^2_{r(t-r)/t}\left(z \right)
           (D_{r_1,z_1}U(r\,,z))^2  \right\|_{\frac{p}2}\\
           & \leq  \int_0^t\d r\int_\R\d z\int_0^r\d r_1\int_\R\d z_1\, \bm{p}^2_{r(t-r)/t}\left(z \right)
           \left\|(D_{r_1,z_1}U(r\,,z))^2  \right\|_{\frac{p}2}\\
           & =  \int_0^t\d r\int_\R\d z\int_0^r\d r_1\int_\R\d z_1\, \bm{p}^2_{r(t-r)/t}\left(z \right)
           \left\|D_{r_1,z_1}U(r\,,z)  \right\|_{p}^2\\
           & \lesssim   \int_0^t\d r\int_\R\d z\int_0^r\d r_1\int_\R\d z_1\, \bm{p}^2_{r(t-r)/t}\left(z \right)
           \bm{p}^2_{r_1(r-r_1)/r}\left(z_1- \frac{r_1}{r}z \right),
           \end{align*}
           where the last inequality holds by \eqref{derivative}. The semigroup property of the heat kernel yields that
           \begin{align*}
           \left(\E[\|Dv\|^p_{\mathcal{H}^{\otimes2}}]\right)^{\frac2p}&\lesssim \int_0^t\d r \, \bm{p}_{2r(t-r)/t}\left(0\right)\int_0^r\d r_1\, \bm{p}_{2r_1(r-r_1)/r_1}\left(0\right)\\
           &= \int_0^t\d r \, \bm{p}_{2r(t-r)/t}\left(0\right)r\int_0^1\d r_1\, \bm{p}_{2r(1-r_1)}\left(0\right)\\
           &\asymp \int_0^t\sqrt{r}\, \bm{p}_{2r(t-r)/t}\left(0\right)\d r <\infty. 
           \end{align*}
           Hence, we have $v\in \mathbb{D}^{1,p}(\mathcal{H})$ for all $p\geq 6$. It is clear that $U(t\,,0) \in \mathbb{D}^{2,p}$ for all $p\geq 2$. Now it remains to verify that 
           \begin{align}\label{negative}
           (D_vU(t\,,0))^{-1} \in L^q(\Omega), \quad \text{for some $q\geq 4$}. 
           \end{align}
           
           We will adopt the method in \cite[Proposition 5.1]{KuN22} to prove \eqref{negative}. 
           Choose $\alpha \in (\frac43, 2)$. 
           Then we have 
           \begin{align}\label{eq:int}
           t-\epsilon^\alpha \geq \frac{t}2, \quad \text{for all $\epsilon \in (0, 1\wedge \frac{t}2)$}.
           \end{align}
           For $\epsilon \in (0, 1\wedge \frac{t}2)$, since the Malliavin derivative  $D_{r, z}U(t\,,0)$ is nonnegative almost surely. 
           We have 
           \begin{align*}
           D_vU(t\,,0) \geq \int_{t-\epsilon^\alpha}^t\int_\R v(r\,,z)D_{r, z}U(t\,,0)\d z \d r. 
           \end{align*}
           Because the Malliavin derivative of $U(t\,,0)$ satisfies 
           \begin{align*}
           D_{r, z}U(t\,,0) = \bm{p}_{r(t-r)/t}\left(z \right)U(r\,,z) + \int_r^t\int_\R
	 \bm{p}_{s(t-s)/t}\left(y\right)D_{r,z}U(s\,,y)\,
	\xi(\d s\, \d y)
           \end{align*}
           for $0<r<t$ and $D_{r, z}U(t\,,0)=0$ for $r\geq t$, we write
           \begin{align*}
           D_vU(t\,,0) \geq I_1+I_2,
           \end{align*}
           where
           \begin{align*}
           I_1&= \int_{t-\epsilon^\alpha}^t\int_\R \bm{p}^2_{r(t-r)/t}\left(z \right)U^2(r\,,z)\d z \d r,\\
           I_2&=
           \int_{t-\epsilon^\alpha}^t\int_\R\bigg( \bm{p}_{r(t-r)/t}\left(z \right)U(r\,,z) \int_r^t\int_\R
	 \bm{p}_{s(t-s)/t}\left(y\right)D_{r,z}U(s\,,y)\,
	\xi(\d s\, \d y) \bigg)\d z \d r. 
           \end{align*}
           Hence, we have for $\epsilon \in (0, 1\wedge \frac{t}2)$
           \begin{align}\label{I1I2}
           \P\{D_vU(t\,,0)< \epsilon\}\leq \P\{I_1<2\epsilon\} + \P\{I_2|>\epsilon\}.
           \end{align} 
           
           We first estimate $\P\{I_1<2\epsilon\}$. By Chebyshev's inequality,  for any $q\geq 2$, 
           \begin{align*}
           \P\{I_1<2\epsilon\}= \P\{I_1^{-1}>(2\epsilon)^{-1}\}\leq (2\epsilon)^q\E\left[\left( \int_{t-\epsilon^\alpha}^t\int_\R \bm{p}^2_{r(t-r)/t}\left(z \right)U^2(r\,,z)\d z \d r\right)^{-q}\right].
           \end{align*}
           Set 
           \begin{align}\label{eq:m}
           m(\epsilon)&:= \int_{t-\epsilon^\alpha}^t\int_\R \bm{p}^2_{r(t-r)/t}\left(z\right)\d z\d r\nonumber\\
           &= \int_{t-\epsilon^\alpha}^t \bm{p}_{2r(t-r)/t}\left(0\right)\d r \asymp  \int_{t-\epsilon^\alpha}^t \frac{\d r}{\sqrt{t-r}} \qquad \text{because of \eqref{eq:int}}\nonumber\\
           &\asymp \epsilon^{\alpha/2}.
           \end{align}
           Since the function $x\to x^{-q}$ is convex, we apply Jensen's inequality with respect to the probability measure 
           $\frac{1}{m(\epsilon)} \bm{p}^2_{r(t-r)/t}\left(z\right)\d z\d r$ to write
           \begin{align*}
           &\E\left[\left( \frac{1}{m(\epsilon)}\int_{t-\epsilon^\alpha}^t\int_\R \bm{p}^2_{r(t-r)/t}\left(z\right)U^2(r\,,z)\d z \d r\right)^{-q}\right]\\
           &\qquad \qquad\qquad \leq \frac{1}{m(\epsilon)}\int_{t-\epsilon^\alpha}^t\int_\R \bm{p}^2_{r(t-r)/t}\left(z \right)\E[U^{-2q}(r\,,z)]\d z \d r\\
           &\qquad \qquad \qquad \lesssim  \frac{1}{m(\epsilon)}\int_{t-\epsilon^\alpha}^t\int_\R \bm{p}^2_{r(t-r)/t}\left(z \right)\d z \d r=1,
                      \end{align*}
           where the second inequality is due to \cite[(72)]{KuN22}; see also \cite[Theorem 1.4]{ChK17}. Hence, we obtain that
           for any $q\geq 2$,
           \begin{align}\label{eq:I1}
           \P\{I_1<2\epsilon\} \lesssim \epsilon^{q}m(\epsilon)^{-q} \asymp \epsilon^{(1-\alpha/2)q},
           \end{align}
           thanks to \eqref{eq:m}.

           Next, we estimate $\P\{I_2|>\epsilon\}$. Using stochastic Fubini's theorem, we write
           \begin{align*}
           I_2= \int_{t-\epsilon^\alpha}^t\int_\R\left[ \bm{p}_{s(t-s)/t}\left(y \right) 
           \int^s_{t-\epsilon^\alpha}\int_\R  \bm{p}_{r(t-r)/t}\left(z \right)U(r\,,z) D_{r,z}U(s\,,y)\d z\d r
           \right]\xi(\d s\,\d y).
           \end{align*}
           By Burkholder's inequality and Minkowski's inequality, for any $q\geq 2$,
           \begin{align*}
           \|I_2\|_q^2 &\lesssim \left\|\int_{t-\epsilon^\alpha}^t\int_\R\bm{p}^2_{s(t-s)/t}\left(y\right) 
           \left[ \int^s_{t-\epsilon^\alpha}\int_\R  \bm{p}_{r(t-r)/t}\left(z \right)U(r\,,z) D_{r,z}U(s\,,y)\d z\d r
           \right]^2\d y\d s\right\|_{\frac{q}{2}}\\
           &\leq \int_{t-\epsilon^\alpha}^t\int_\R\bm{p}^2_{s(t-s)/t}\left(y \right) 
           \left\| \int^s_{t-\epsilon^\alpha}\int_\R  \bm{p}_{r(t-r)/t}\left(z \right)U(r\,,z) D_{r,z}U(s\,,y)\d z\d r
           \right\|_q^2\d y\d s\\
           &\leq  \int_{t-\epsilon^\alpha}^t\int_\R\bm{p}^2_{s(t-s)/t}\left(y \right) 
           \left[\int^s_{t-\epsilon^\alpha}\int_\R  \bm{p}_{r(t-r)/t}\left(z\right)\|U(r\,,z) D_{r,z}U(s\,,y)\|_q\d z\d r
           \right]^2\d y\d s.
           \end{align*}
           Using H\"{o}lder's inequality, \eqref{moment} and \eqref{derivative},  we see that
           \begin{align*}
           \|U(r\,,z) D_{r,z}U(s\,,y)\|_q \leq            \|U(r\,,z)\|_{2q} \|D_{r,z}U(s\,,y)\|_{2q} \lesssim \bm{p}_{r(s-r)/s}\left(z-\frac{r}{s}y\right).
           \end{align*}
           Thus, the preceding implies that
           \begin{align}\label{I21}
           \|I_2\|_q^2 &\lesssim
             \int_{t-\epsilon^\alpha}^t\int_\R\bm{p}^2_{s(t-s)/t}\left(y \right) 
           \left[\int^s_{t-\epsilon^\alpha}\int_\R  \bm{p}_{r(t-r)/t}\left(z\right)\bm{p}_{r(s-r)/s}\left(z-\frac{r}{s}y\right)\d z\d r
           \right]^2\d y\d s\nonumber\\
           &= \int_{t-\epsilon^\alpha}^t\int_\R\bm{p}^2_{s(t-s)/t}\left(y \right) 
           \left[\int^s_{t-\epsilon^\alpha}  \bm{p}_{[r(t-r)/t] + [r(s-r)/s]}\left(\frac{r}{s}y \right)\d r
           \right]^2\d y\d s \nonumber\\
           &= \int_{t-\epsilon^\alpha}^t\int_\R\bm{p}^2_{s(t-s)/t}\left(y \right) 
           \left[\int^s_{t-\epsilon^\alpha}  \frac{s}{r} \, \bm{p}_{(s^2[r(t-r)/t] + s^2[r(s-r)/s])/r^2}\left(y \right)\d r
           \right]^2\d y\d  s
                      \end{align}
           where the first equality holds by the semigroup property of heat kernel and the second equality follows from the 
           following identity
           \begin{align}\label{eq:scale}
           \bm{p}_t(\alpha x)= \alpha^{-1}\bm{p}_{t/\alpha^2}(x) \qquad[t, \alpha>0, x\in \R].
           \end{align}
           Denote
           \begin{align*}
           \lambda(r)= (s^2[r(t-r)/t] + s^2[r(s-r)/s])/r^2.
           \end{align*}
           We use  the formula
	\begin{equation} \label{E:equa2}
		\bm{p} _\sigma(x)     \bm{p} _\tau (x)   = 
		\bm{p}_{\sigma+\tau}(0) \bm{p}_{\sigma \tau/(\sigma+\tau)}(x)
		\qquad [\sigma, \tau >0, x\in \R],
	\end{equation}
	to derive that
	           \begin{align}\label{I22}
                      & \int_{t-\epsilon^\alpha}^t\int_\R\bm{p}^2_{s(t-s)/t}\left(y\right) 
           \left[\int^s_{t-\epsilon^\alpha}  \frac{s}{r} \, \bm{p}_{\lambda(r)}\left(y\right)\d r
           \right]^2\d y\d s\nonumber\\
                      &\quad= \int_{t-\epsilon^\alpha}^t\d s\int_\R\d y\, \bm{p}_{2s(t-s)/t}\left(0\right)  \bm{p}_{s(t-s)/(2t)}\left(y\right) 
           \int_{[t-\epsilon^\alpha, s]^2}  \frac{s\d r_1}{r_1} \frac{s\d r_2}{r_2} \, \bm{p}_{\lambda(r_1)+\lambda(r_2)}\left(0\right)
           \nonumber\\
           &\qquad\qquad\qquad\qquad \qquad \qquad \qquad 
           \bm{p}_{\lambda(r_1)\lambda(r_2)/(\lambda(r_1)+\lambda(r_2))}\left(y\right)\nonumber\\
                                 &\quad= \int_{t-\epsilon^\alpha}^t\d s\, \bm{p}_{2s(t-s)/t}\left(0\right)
           \int_{[t-\epsilon^\alpha, s]^2}  \frac{s\d r_1}{r_1} \frac{s\d r_2}{r_2} \bm{p}_{\lambda(r_1)+\lambda(r_2)}\left(0\right)
             \bm{p}_{[s(t-s)/(2t)] + [\lambda(r_1)\lambda(r_2)/(\lambda(r_1)+\lambda(r_2))]}\left(0\right),
                       \end{align}
           where we used semigroup property of heat kernel in the second equality.  
           Notice that 
           \begin{align*}
           \bm{p}_{[s(t-s)/(2t)] + [\lambda(r_1)\lambda(r_2)/(\lambda(r_1)+\lambda(r_2))]}\left(0\right)
           \geq \bm{p}_{\lambda(r_1)\lambda(r_2)/(\lambda(r_1)+\lambda(r_2))}\left(0\right).
           \end{align*}
           Thus, in view of \eqref{I21} and \eqref{I22}, 
           it follows that
           	           \begin{align*}
           \|I_2\|_q^2 
           &\lesssim   \int_{t-\epsilon^\alpha}^t\d s\, \bm{p}_{2s(t-s)/t}\left(0\right)   
           \int_{[t-\epsilon^\alpha, s]^2}  \frac{s\d r_1}{r_1} \frac{s\d r_2}{r_2} \bm{p}_{\lambda(r_1)+\lambda(r_2)}\left(0\right)
           \bm{p}_{\lambda(r_1)\lambda(r_2)/(\lambda(r_1)+\lambda(r_2))}\left(0\right)\\
           &\asymp  \int_{t-\epsilon^\alpha}^t\d s\, \bm{p}_{2s(t-s)/t}\left(0\right)   
           \left[\int_{t-\epsilon^\alpha}^s \bm{p}_{\lambda(r)}(0)  \frac{s\d r}{r}\right]^2.
           \end{align*}

            Now for all $s\in [t-\epsilon^\alpha, t]$ and $r\in [t-\epsilon^\alpha, s]$, 
           we have $s\in [\frac{t}{2}, t]$ and $r\in [\frac{t}{2}, t]$; see \eqref{eq:int}. Moreover, for $s, r\in [\frac{t}{2}, t]$, we have 
           \begin{align*}
           \lambda(r)\geq s^2r(t-r)/(tr^2) =\frac{s(t-r)}{tr} \gtrsim t-r. 
           \end{align*}
           Therefore, we conclude that 
           	           \begin{align*}
           \|I_2\|_q^2 
           &\lesssim \int_{t-\epsilon^\alpha}^t \frac{\d s}{\sqrt{t-s}} \left[\int_{t-\epsilon^\alpha}^t \frac{\d r}{\sqrt{t-r}}\right]^2 \asymp \epsilon^{\frac{3}{2}\alpha}.
           \end{align*}
         Hence, by Chebyshev's inequality, for any $q\geq 2$,
           \begin{align} \label{eq:I2}
           \P\{I_2|>\epsilon\} &\leq \epsilon^{-q}\E[|I_2|^q]\lesssim \epsilon^{(\frac{3}{4}\alpha-1)q}.
           \end{align}

           Letting $\alpha = \frac85$, we deduce from \eqref{eq:I1}, \eqref{eq:I2} and \eqref{I1I2} that for any $q\geq 2$,
           \begin{align}\label{smallprob}
           \P\{D_vU(t\,,0)< \epsilon\}\lesssim \epsilon^{\frac{q}{5}},
           \end{align}
           for $\epsilon \in (0, 1\wedge \frac{t}2)$, where the implicit constant only depends on $t$ and $q$.  The estimate \eqref{smallprob} ensures that $D_vU(t\,,0)$
           has finite negative moments of all orders. This verifies \eqref{negative} and completes the proof. 
\end{proof}

\begin{remark}
          We refer to \cite{GaH22} and \cite{HuL22} for the asymptotic behavior of the density of solution to parabolic Anderson model.
\end{remark}

\begin{corollary}\label{cor:density2}
          For fixed $t>0$ and $x\in \R$, the random variable $\log U(t\,,x)$ has a bounded probability density function on $\R$. 
\end{corollary}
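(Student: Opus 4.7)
The plan is to apply Proposition \ref{prop:bound} to the random variable $F=\log U(t,0)$, with the same auxiliary process $v$ as in Proposition \ref{prop:density}, and then transfer the conclusion to arbitrary $x\in\R$ via the spatial stationarity of $\{U(t,x)\}_{x\in\R}$ (see \cite[Proposition 1.4]{ACQ11}). Since $\log U(t,x)\stackrel{d}{=}\log U(t,0)$ for every $x$, it suffices to prove that $\log U(t,0)$ has a bounded density on $\R$.

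By the chain rule for Malliavin derivatives applied to the smooth function $\log:(0,\infty)\to\R$, one has $D_{r,z}\log U(t,0)=D_{r,z}U(t,0)/U(t,0)$, and so
\begin{align*}
D_v\log U(t,0)=\frac{D_vU(t,0)}{U(t,0)},
\end{align*}
which is strictly positive a.s.\ because $U(t,0)>0$ a.s.\ and $D_vU(t,0)>0$ a.s.\ by Proposition \ref{prop:density}. The vector to feed into Proposition \ref{prop:bound} is therefore $\tilde v:=vU(t,0)/D_vU(t,0)$, and it remains to verify $\tilde v\in{\rm Dom}\,\delta$ via the sufficient conditions of Remark \ref{rem:con}. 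Condition (ii) reduces to $vU(t,0)\in\mathbb{D}^{1,p}(\mathcal{H})$, which follows from $v\in\mathbb{D}^{1,p}(\mathcal{H})$ (already proved inside Proposition \ref{prop:density}) together with $U(t,0)\in\mathbb{D}^{1,p}$ via the product rule and H\"older's inequality. For condition (i), the membership $\log U(t,0)\in\mathbb{D}^{2,p}$ follows from $U(t,0)\in\mathbb{D}^{2,p}$ (see \cite{CHN21}) combined with the all-order negative-moment bound on $U(t,0)$ (see \cite[Theorem 1.4]{ChK17}, also used as \cite[(72)]{KuN22} inside Proposition \ref{prop:density}); and $[D_v\log U(t,0)]^{-1}=U(t,0)/D_vU(t,0)\in L^q$ follows from H\"older, the moment bound \eqref{moment}, and the tail estimate \eqref{smallprob} from Proposition \ref{prop:density}, which gives all negative moments of $D_vU(t,0)$. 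Proposition \ref{prop:bound} then yields a bounded continuous density for $\log U(t,0)$.

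The main technical obstacle is legitimizing the chain rule for the non-globally-Lipschitz function $\log$. I would handle this by approximating $\log$ by the Lipschitz truncation $\log_\epsilon(u):=\log(u\vee\epsilon)$, applying the standard chain rule, and passing to the limit $\epsilon\downarrow 0$; the negative-moment bounds on $U(t,0)$ guarantee both the $L^p$-convergence of the approximating random variable and of its first and second Malliavin derivatives, so that the closedness of $D$ and $D^2$ identifies the limit with $\log U(t,0)$ in $\mathbb{D}^{2,p}$. Everything else is routine bookkeeping using estimates that already appear inside the proof of Proposition \ref{prop:density}.
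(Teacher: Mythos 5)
Your route is genuinely different from the paper's, and in outline it can be made to work, but it is substantially heavier. The paper does not redo any Malliavin analysis for $\log U(t,0)$: it simply writes $f_{\log U(t,0)}(y)=f_{U(t,0)}(\e^y)\e^y$, uses the boundedness of $f_{U(t,0)}$ from Proposition \ref{prop:density} for $y\le y_0$, and for large $y$ beats the factor $\e^y$ by combining the explicit density formula of Proposition \ref{prop:density} with Cauchy--Schwarz and the upper-tail bound $\P\{U(t,0)>\e^y\}\le c_1\e^{-c_2y^{3/2}}$ of Corwin--Ghosal. Your plan instead applies Proposition \ref{prop:bound} directly to $F=\log U(t,0)$; this buys you independence from the tail estimate \cite{CoG20}, but it costs you the verification that $\log U(t,0)\in\mathbb{D}^{2,p}$ for some $p\ge 6$, which is the real burden: you need the second-order chain rule $D^2\log U=D^2U/U-(DU\otimes DU)/U^2$, hence second Malliavin derivative bounds for $U$ together with negative moments of all orders, and your proposed truncation $\log_\epsilon(u)=\log(u\vee\epsilon)$ is only Lipschitz, not $C^2$, so the ``standard chain rule'' does not apply to it at the level of second derivatives --- you would need a genuinely smooth ($C^2$ with controlled derivatives) approximation of $\log$ before passing to the limit via closedness of $D$ and $D^2$. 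This is fixable but not routine bookkeeping. One further slip: the ``vector to feed into Proposition \ref{prop:bound}'' is the auxiliary direction ($v$ itself, or $v'=vU(t,0)$), not $\tilde v=vU(t,0)/D_vU(t,0)$, which is the object required to lie in ${\rm Dom}\,\delta$; moreover you should fix one choice, since with $v$ condition (ii) is just $v\in\mathbb{D}^{1,p}(\mathcal{H})$ (already proved) and condition (i) asks for $(D_vU(t,0)/U(t,0))^{-1}\in L^q$, whereas with $v'$ condition (ii) is $vU(t,0)\in\mathbb{D}^{1,p}(\mathcal{H})$ and condition (i) asks for $(D_vU(t,0))^{-1}\in L^q$; you currently mix the two. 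Either choice is verifiable with the estimates \eqref{moment}, \eqref{derivative} and \eqref{smallprob}, so this is a consistency issue rather than a fatal gap, but as written the argument is incomplete at the $\mathbb{D}^{2,p}$ step, which the paper's change-of-variables proof avoids entirely.
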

\begin{proof}
          By stationary, we assume $x=0$. Denote $X=U(t\,,0)$ and $Y=\log U(t\,,0)$. Then  we have the following relation between the probability density functions of $X$ and $Y$:
          \begin{align}\label{d1}
          f_Y(y)= f_X(\e^y)\e^y, \quad \text{for all $y\in\R$}. 
          \end{align}
          In light of Proposition \ref{prop:density}, 
          \begin{align}\label{d2}
          f_X(\e^y)=\E\left[\bm{1}_{\{U(t\,,0)>\e^y\}}\delta\left(\frac{v}{D_vU(t\,,0)}\right)\right] \leq \left\| \delta\left(\frac{v}{D_vU(t\,,0)}\right) \right\|_2 
          \left(\P\{U(t\,,0)>\e^y\}\right)^{1/2}
          \end{align}
          thanks to the Cauchy-Schwarz's inequality. According to \cite[Theorem 1.11]{CoG20}, there exist positive constants $c_1, c_2$ and $y_0$ such that 
          \begin{align}\label{d3}
          \P\{U(t\,,0)>\e^y\} \leq c_1\e^{-c_2y^{3/2}}, \quad \text{for all $y\geq y_0$}.
          \end{align}
          Thus, we see from \eqref{d1}, \eqref{d2} and \eqref{d3} that 
          \begin{align*}
          \sup_{y\geq y_0}f_Y(y) <\infty. 
          \end{align*}
          Moreover, since $f_X$ is bounded by Proposition \ref{prop:density}, we see from \eqref{d1} tha $\sup_{y\leq y_0}f_Y(y)<\infty$. 
          Therefore, we conclude that the probability density function $f_Y$ is bounded. 
\end{proof}

\begin{corollary}\label{cor:density}
         Fix $t>0$. There exists a positive constant $K$ such that for all $x\in \R$
         \begin{align}\label{eq:prob}
         &\sup_{a, b\in \R}\left(\P\left\{\log U(t\,,x) \leq a, \log U(t\,,0) \leq b\right\} - \P\left\{\log U(t\,,x) \leq a\right\}\P\left\{\log U(t\,,0) \leq b\right\}\right)\nonumber \\
         &\qquad\qquad\qquad \qquad\qquad\qquad\qquad\qquad\qquad\qquad\qquad
         \leq K\left[\Cov(\log U(t\,,x)\,, \log U(t\,,0))\right]^{1/3}.
         \end{align}
\end{corollary}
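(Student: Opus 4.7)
The plan is to exploit both the association of $(\log U(t,x),\log U(t,0))$ provided by Lemma \ref{lem:ass} and the boundedness of the marginal densities provided by Corollary \ref{cor:density2}, via a Lipschitz-smoothing plus optimization argument that is a variant of Newman's covariance inequality for associated random variables. Write $X=\log U(t\,,x)$ and $Y=\log U(t\,,0)$. The pair $(X,Y)$ is associated, and by stationarity of $U(t\,,\cdot)$ both marginal densities are bounded by the same constant $M=M(t)$ independent of $x$. Observe first that
\begin{equation*}
\P\{X\leq a,\,Y\leq b\}-\P\{X\leq a\}\P\{Y\leq b\}=\Cov(\mathbf{1}_{\{X\leq a\}},\mathbf{1}_{\{Y\leq b\}}),
\end{equation*}
which is already nonnegative by association applied to the nonincreasing indicators.

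For each $\epsilon>0$ I would introduce nonincreasing Lipschitz cutoffs $\phi_\epsilon^a$ and $\psi_\epsilon^b$ with values in $[0,1]$ and Lipschitz constant $\epsilon^{-1}$, satisfying $\mathbf{1}_{\{\cdot\leq a\}}\leq \phi_\epsilon^a\leq \mathbf{1}_{\{\cdot\leq a+\epsilon\}}$ and analogously for $b$. Applying Newman's covariance inequality to the Lipschitz \emph{nondecreasing} functions $-\phi_\epsilon^a$ and $-\psi_\epsilon^b$ of the associated pair $(X,Y)$ yields
\begin{equation*}
0\leq \Cov(\phi_\epsilon^a(X),\psi_\epsilon^b(Y))\leq \epsilon^{-2}\Cov(X,Y).
\end{equation*}

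Next I would estimate the approximation error by the telescoping identity
\begin{equation*}
\Cov(\mathbf{1}_{\{X\leq a\}},\mathbf{1}_{\{Y\leq b\}})=\Cov(\phi_\epsilon^a(X),\psi_\epsilon^b(Y))+\Cov(\mathbf{1}_{\{X\leq a\}}-\phi_\epsilon^a(X),\mathbf{1}_{\{Y\leq b\}})+\Cov(\phi_\epsilon^a(X),\mathbf{1}_{\{Y\leq b\}}-\psi_\epsilon^b(Y)).
\end{equation*}
Using the elementary bound $|\Cov(A,B)|\leq 2\E[|A|]\sup|B|$ together with $\E[|\mathbf{1}_{\{X\leq a\}}-\phi_\epsilon^a(X)|]\leq \P\{a<X\leq a+\epsilon\}\leq M\epsilon$ (and similarly for $Y$), each of the two remainders is at most $2M\epsilon$. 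Combining these estimates yields
\begin{equation*}
\Cov(\mathbf{1}_{\{X\leq a\}},\mathbf{1}_{\{Y\leq b\}})\leq 4M\epsilon+\epsilon^{-2}\Cov(X,Y).
\end{equation*}
The choice $\epsilon=[\Cov(X,Y)/(2M)]^{1/3}$ balances the two terms and produces the desired bound of the form $K[\Cov(X,Y)]^{1/3}$ with $K$ depending only on $t$, uniformly in $a,b\in\R$ and in $x\in\R$.

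The main obstacle is invoking Newman's covariance inequality cleanly for Lipschitz (rather than $C^1$) cutoffs; this is standard via a mollification argument and is the only nontrivial external input. The remainder is a soft estimate plus the elementary optimization in $\epsilon$, and the uniformity in $x$ is automatic from stationarity of $U(t\,,\cdot)$. A minor subtlety is the degenerate case $\Cov(X,Y)=0$, but then the inequality $\Cov(\mathbf{1}_{\{X\leq a\}},\mathbf{1}_{\{Y\leq b\}})\leq 4M\epsilon$ valid for every $\epsilon>0$ forces the left-hand side to vanish, so the claim holds trivially.
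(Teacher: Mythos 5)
Your argument is correct, and it uses exactly the two ingredients the paper uses (association of $\{\log U(t,\cdot)\}$ from Lemma \ref{lem:ass} and boundedness of the marginal densities from Corollary \ref{cor:density2}, with stationarity giving a density bound $M$ uniform in $x$). The difference is in how the final inequality is obtained: the paper simply quotes the known result for associated random variables with bounded densities, namely \cite[(6.2.20), Theorem 6.2.15]{PR12}, whereas you reprove that inequality from scratch. Your route — write the left-hand side as $\Cov(\bm{1}_{\{X\le a\}},\bm{1}_{\{Y\le b\}})$, smooth the indicators by monotone Lipschitz cutoffs, control the smoothed covariance by Newman's Lipschitz covariance inequality $0\le\Cov(\phi(X),\psi(Y))\le \epsilon^{-2}\Cov(X,Y)$, bound the two remainder covariances by $2M\epsilon$ each via the bounded densities, and optimize $\epsilon\asymp[\Cov(X,Y)]^{1/3}$ — is a direct real-variable version of the textbook proof, which is usually run through characteristic functions (Newman's inequality for $\e^{\mathrm{i}sX}$) together with a Sadikova/Esseen-type smoothing inequality. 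What your approach buys is a self-contained, more elementary derivation with an explicit constant $K$ depending only on the density bound $M(t)$; what the citation buys is brevity. The only hypotheses you should make explicit are that $\log U(t,x)\in L^2(\Omega)$ (so that $\Cov(X,Y)$ is finite; this follows from the positive and negative moment bounds for $U$) and that $\Cov(X,Y)\ge0$ (immediate from association), both of which are available in the paper, and the extension of Newman's inequality from $C^1$ to Lipschitz cutoffs is standard, as you note. Your handling of the degenerate case $\Cov(X,Y)=0$ is also fine.
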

\begin{proof}
          Because the process $\{\log U(t\,,x): (t,x)\in (0, \infty)\times \R\}$ is associated (see Lemma \ref{lem:ass}) and the random variable 
           $\log U(t\,,x)$ has a bounded probability density function (see Corollary \ref{cor:density2}),
          the above inequality follows immediately from  \cite[(6.2.20)]{PR12} (see also \cite[Theorem 6.2.15]{PR12}).
\end{proof}

\section{Proof of Theorem \ref{th:main}}\label{sec:main}

We prove Theorem \ref{th:main} in this section. First, let us recall a sharp upper tail estimates for the solution to parabolic Anderson model established in \cite{GaH22}. Let $\{\mathcal{Z}(t\,,x):(t,x)\in (0, \infty)\times \R\}$  solves 
\begin{align}\label{PAM2}
\begin{cases}
\partial_t\mathcal{Z}(t\,,x)= \frac14\partial_x^2\mathcal{Z}(t\,,x) + \mathcal{Z}(t\,,x)\, \xi(t\,,x),\\
\mathcal{Z}(0)=\delta_0.
\end{cases}
\end{align}
According to \cite[Theorem 2]{GaH22}, for fixed $t>0$, there exist $\theta_0>0$ and $C<\infty$ such that for all $\theta>\theta_0$,
\begin{align*}
\exp\left(-\frac43\theta^{3/2}-\theta^{1/2}\log\theta\right)\leq\P\left( \frac{\log \mathcal{Z}(t\,,0) + \frac{t}{12}}{t^{1/3}}\geq \theta \right) \leq \exp\left(-\frac43\theta^{3/2} + C\theta^{3/4}\right).
\end{align*}
Notice that the result of \cite[Theorem 2]{GaH22} is for $t\in [t_0, \infty)$ with $t_0>0$. But $t_0$ maybe chosen to be any positive value; see the discussions at the beginning of Section 1.5 of \cite{GaH22}. Hence, we can have the above estimate for fixed $t>0$. The above estimate implies that for fixed $t>0$, 
\begin{align*}
\lim_{\theta\to+\infty} \frac{\log \P\left( \log \mathcal{Z}(t\,,0) +\frac{t}{12} \geq t^{1/3}\theta \right)}{\theta^{3/2}}= -\frac43
\end{align*}
and hence
\begin{align}\label{tail1}
\lim_{\theta\to+\infty} \frac{\log \P\left( \log \mathcal{Z}(t\,,0) \geq \theta \right)}{\theta^{3/2}}= -\frac{4}{3\sqrt{t}}.
\end{align}
The solution to \eqref{PAM2} satisfies 
\begin{align*}
	\mathcal{Z}(t\, , x) = \bm{p}_{t/2}(x) + \int_0^t\int_\R\bm{p}_{(t - s)/2}(x - y)\mathcal{Z}(s\,, y)\,\xi(\d s\,\d y).
\end{align*}
Using \eqref{eq:scale} and the fact that $\{\frac12\xi(s/2\,,y/2): (s,y)\in (0, \infty)\times \R)\}$ has the distribution as space-time white noise, it is readily to see that 
\begin{align*}
	\frac12\mathcal{Z}(t/2\, , x/2) = \bm{p}_{t}(x) + \int_0^t\int_\R\bm{p}_{t - s}(x - y)\frac12
	\mathcal{Z}(s/2\,, y/2)\,\xi(\d s\,\d y).
\end{align*}
Therefore, we conclude that $\frac12\mathcal{Z}(t/2\, , x/2)$ has the distribution as $u(t\,,x)$ which solves \eqref{PAM}. By \eqref{tail1}, we obtain that
\begin{align}\label{eq:tail1}
\lim_{\theta\to+\infty} \frac{\log \P\left( \log U(t\,,0) \geq \theta \right)}{\theta^{3/2}}= -\frac{4}{3}\sqrt{\frac2t}.
\end{align}

We next  give some estimate on the covariance $\Cov(\log U(t\,,x)\,, \log U(t\,,0))$. 
\begin{lemma}\label{lem:cov:decay}
          For fixed $t>0$, there exists a positive constant $c$ depending on $t$ such that for all $x\in \R$
          \begin{align}\label{eq:cov}
          \left|\Cov(\log U(t\,,x)\,, \log U(t\,,0))\right| \leq \frac{c}{|x|}.
          \end{align}
\end{lemma}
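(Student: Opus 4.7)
The plan is to apply the Poincar\'e-type covariance inequality \eqref{Poincare:Cov} to $F=\log U(t\,,x)$ and $G=\log U(t\,,0)$. Since $D_{s,y}\log U(t\,,x)=D_{s,y}U(t\,,x)/U(t\,,x)$, my first task is to control $\|D_{s,y}\log U(t\,,x)\|_2$. By the Cauchy--Schwarz inequality,
\[
\|D_{s,y}\log U(t\,,x)\|_2\le \|D_{s,y}U(t\,,x)\|_4\,\|U^{-1}(t\,,x)\|_4.
\]
The first factor is bounded via \eqref{derivative} by $\bm{p}_{s(t-s)/t}\bigl(y-\tfrac{s}{t}x\bigr)$, while the second is finite (and uniform in $x$ by stationarity of $U(t\,,\cdot)$) thanks to the negative moment estimate \cite[(72)]{KuN22} already invoked in Proposition \ref{prop:density}.

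Plugging these pointwise bounds into \eqref{Poincare:Cov} reduces the problem to estimating
\[
\int_0^t\!\!\d s\int_\R\!\!\d y\;\bm{p}_{s(t-s)/t}\bigl(y-\tfrac{s}{t}x\bigr)\,\bm{p}_{s(t-s)/t}(y).
\]
Applying the Gaussian identity \eqref{E:equa2} (equivalently, completing the square) with $\sigma=\tau=s(t-s)/t$ performs the $y$-integration and yields $\bm{p}_{2s(t-s)/t}(\tfrac{s}{t}x)$. Thus the lemma reduces to the deterministic estimate
\[
J(x):=\int_0^t \bm{p}_{2s(t-s)/t}\!\left(\tfrac{s}{t}x\right)\d s\lesssim \frac{1}{|x|}.
\]

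The main (and essentially only) obstacle is proving this bound on $J(x)$. After the substitution $s=tu$ it reads $J(x)=\sqrt{t/(4\pi)}\int_0^1 \bigl(u(1-u)\bigr)^{-1/2}\exp\bigl(-ux^2/(4t(1-u))\bigr)\d u$. I will split the integral at $u=1/2$. On $[0,1/2]$, the bounds $u(1-u)\ge u/2$ and $u/(1-u)\ge u$ give an integrand dominated by $u^{-1/2}\exp(-ux^2/(4t))$; the substitution $w=ux^2/(4t)$ converts this to $\int_0^\infty w^{-1/2}e^{-w}\,\d w=\sqrt{\pi}$ times a factor of order $\sqrt{t}/|x|$. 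On $[1/2,1]$ the exponent $ux^2/(4t(1-u))$ blows up as $u\to 1$, so that contribution decays faster than any polynomial in $|x|$ and is absorbed into the implicit constant. Combining the two pieces yields $J(x)\lesssim c(t)/|x|$, which is the desired bound.
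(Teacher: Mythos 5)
Your proposal is correct and follows essentially the same route as the paper's proof: the Poincar\'e-type inequality \eqref{Poincare:Cov}, the chain rule $D\log U = DU/U$, H\"older's inequality with the negative moments of $U$, the kernel bound \eqref{derivative}, Gaussian integration in $y$ giving $\bm{p}_{2s(t-s)/t}\left(\tfrac{s}{t}x\right)$, and a split of the resulting one-dimensional integral at $1/2$ yielding a $1/|x|$ term plus a super-polynomially small term. Two minor remarks: the $y$-integration is really the semigroup (convolution) property rather than \eqref{E:equa2}, which requires both kernels to be evaluated at the same spatial point (your ``completing the square'' alternative is the right justification); and the $[1/2,1]$ piece, which you only assert, is immediate since $u/(1-u)\ge 1$ there, so the integrand is at most $\sqrt{2}\,(1-u)^{-1/2}\e^{-x^2/(4t)}$, matching the paper's explicit estimate.
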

\begin{proof}
          Since $U(t\,,x)$ belongs to $\mathbb{D}^{1,2}$ and has finite negative moments of all orders (see \cite{CHN21}), 
          the random variable $\log U(t\,,x)$ belongs to $\mathbb{D}^{1,2}$ and we have 
          \begin{align*}
          D_{r,z}\log U(t\,,x) =\bm{1}_{\{0<r<t\}} \frac{D_{r,z}U(t\,,x)}{U(t\,,x)}
          \end{align*}
           (see \cite[Lemma 4.3(i)]{CKNP23}). Applying the Poincar\'{e}
          inequality \eqref{Poincare:Cov}, 
          \begin{align*}
          \left|\Cov(\log U(t\,,x)\,, \log U(t\,,0))\right| &\leq \int_0^{t}\int_\R \left\|\frac{D_{r,z}U(t\,,x)}{U(t\,,x)}\right\|_2
          \left\|\frac{D_{r,z}U(t\,,0)}{U(t\,,0)}\right\|_2\d z\d r\\
          &\leq \left\|U(t\,,0)^{-1}\right\|^2_4 \int_0^{t}\int_\R \left\|D_{r,z}U(t\,,x)\right\|_4
          \left\|D_{r,z}U(t\,,0)\right\|_4\d z\d r\\
          &\lesssim \int_0^t\int_\R \bm{p}_{r(t-r)/t}\left(z-\frac{r}{t}x\right) \bm{p}_{r(t-r)/t}\left(z\right)\d z\d r,
          \end{align*}
          where the second inequality follows from H\"older's inequality and the third inequality is due to \eqref{derivative}.
          By the semigroup property of heat kernel, it follows that
          \begin{align}\label{eq:cov0}
          \left|\Cov(\log U(t\,,x)\,, \log U(t\,,0))\right| &\lesssim \int_0^t\bm{p}_{2r(t-r)/t}\left(\frac{r}{t}x\right)\d r 
          = t\int_0^1\bm{p}_{2tr(1-r)}\left(rx\right)\d r\nonumber\\
          &\asymp \int_0^1\e^{-\frac{rx^2}{4t(1-r)}}\frac{\d r}{\sqrt{r(1-r)}}.
          \end{align}
          Using change of variable, 
          \begin{align}\label{eq:cov1}
          \int_0^{1/2}\e^{-\frac{rx^2}{4t(1-r)}}\frac{\d r}{\sqrt{r(1-r)}} & \leq \int_0^{1/2}\e^{-\frac{rx^2}{4t}}\frac{\d r}{\sqrt{r/2}}\nonumber\\
          &\asymp \frac{1}{|x|}\int_0^{x^2/2} \e^{-\frac{r}{4t}}\frac{\d r}{\sqrt{r}} \leq  \frac{1}{|x|}\int_0^{\infty} \e^{-\frac{r}{4t}}\frac{\d r}{\sqrt{r}}  \asymp \frac{1}{|x|}. 
          \end{align}
          Similarly, 
          \begin{align}\label{eq:cov2}
          \int_{1/2}^1\e^{-\frac{rx^2}{4t(1-r)}}\frac{\d r}{\sqrt{r(1-r)}} & \leq \int_{1/2}^1\e^{-\frac{x^2}{8t(1-r)}}\frac{\d r}{\sqrt{(1-r)/2}}
          =\int_0^{1/2}\e^{-\frac{x^2}{8tr}}\frac{\d r}{\sqrt{r/2}}\nonumber\\
          &= |x|\int_0^{\frac{1}{2x^2}} \e^{-\frac{1}{8tr}}\frac{\d r}{\sqrt{r/2}} \leq |x|\e^{-\frac{x^2}{4t}}\int_0^{\frac{1}{2x^2}}\frac{\d r}{\sqrt{r/2}} \asymp \e^{-\frac{x^2}{4t}}. 
          \end{align}
          We obtain \eqref{eq:cov} from \eqref{eq:cov1}, \eqref{eq:cov2} and \eqref{eq:cov0}.
\end{proof}

We are now ready to prove Theorem \ref{th:main}. 
\begin{proof}[{{Proof of Theorem \ref{th:main}}}]
          Fix $t>0$. Let $\beta$ be a positive number that is strictly less than $\frac18\sqrt{\frac{t}{2}}$. Choose and fix
          $a\in (0, \frac16)$, $\epsilon \in (0, 1)$ such that
          \begin{align}\label{exponent1}
          \beta<\frac{a}{\frac43\sqrt{\frac2{t}} + \epsilon}.
          \end{align}
          For this fixed $\epsilon \in (0, 1)$, by \eqref{eq:tail1}, there exists a positive constant $\tilde{C}$ 
          depending on $\epsilon$ and the fixed $t>0$ such that
          \begin{align}\label{eq:tail2}
           \P\left( \log U(t\,,0) \geq \theta \right) \geq  \e^{-(\frac{4}{3}\sqrt{\frac2t} +\epsilon)\theta^{3/2}}, 
           \quad \text{for all $\theta\geq \tilde{C}$}.
          \end{align}

          Define $x_j= 2jR/\floor*{R^a}$ 
          for $j=1, \ldots, \floor*{R^a}$. Here the notation $\floor*{x}$ denotes the largest integer which is 
          less than or equal to $x$. 
          Assume that $R$ is sufficiently large so that $\floor*{R^a} \geq R^a/2$ and $(\beta\log R)^{2/3} \geq \tilde{C}$, where
          $\tilde{C}$ is the constant in \eqref{eq:tail2}.

          By stationarity of $\{U(t\,,x): x\in \R\}$,
          \begin{align}\label{eq:prob1}
          &\P\left\{\max_{0\leq x\leq 2R}\log U(t\,,x) \leq (\beta\log R)^{2/3}\right\}\leq 
          \P\left\{\max_{1\leq j\leq \floor*{R^a}}\log U(t\,,x_j) \leq (\beta\log R)^{2/3}\right\}\nonumber\\
          &\qquad=\P\left\{\max_{1\leq j\leq \floor*{R^a}}\log U(t\,,x_j) \leq (\beta\log R)^{2/3}\right\} -
          \prod_{j=1}^{\floor*{R^a}}\P\left\{\log U(t\,,x_j) \leq (\beta\log R)^{2/3}\right\}\nonumber\\
          &\qquad\qquad\qquad\qquad\qquad + \left(1-\P\left\{\log U(t\,,0) > (\beta\log R)^{2/3}\right\}\right)^{\floor*{R^a}}
          \end{align}
          Since the process $\{\log U(t\,,x): x\in \R\}$ is associated (see Lemma \ref{lem:ass}), by \cite[(2.6)]{Pu23},
          \begin{align*}
          &\P\left\{\max_{1\leq j\leq \floor*{R^a}}\log U(t\,,x_j) \leq (\beta\log R)^{2/3}\right\} -
          \prod_{j=1}^{\floor*{R^a}}\P\left\{\log U(t\,,x_j) \leq (\beta\log R)^{2/3}\right\}\\
          &\qquad \leq \sum_{1\leq j<k\leq \floor*{R^a}}\bigg(
          \P\left\{\log U(t\,,x_j) \leq (\beta\log R)^{2/3}, \log U(t\,,x_k) \leq (\beta\log R)^{2/3}\right\}\\
          &\qquad\qquad\qquad\qquad \qquad - \P\left\{\log U(t\,,x_j) \leq (\beta\log R)^{2/3}\right\}
          \P\left\{\log U(t\,,x_k) \leq (\beta\log 2R)^{2/3}\right\}
          \bigg)\\
          &\qquad \lesssim  \sum_{1\leq j<k\leq \floor*{R^a}} \left[\Cov(\log U(t\,,x_j)\,, \log U(t\,,x_k))\right]^{1/3}\\
          &\qquad \lesssim  \sum_{1\leq j<k\leq \floor*{R^a}} \frac{1}{|x_k-x_j|^{1/3}} \asymp R^{(a-1)/3}
           \sum_{1\leq j<k\leq \floor*{R^a}} \frac{1}{|k-j|^{1/3}},
          \end{align*}
          where the second inequality holds by Corollary \ref{cor:density} and stationarity, and the third inequality holds by 
          Lemma \ref{lem:cov:decay}. Evidently, we have
          \begin{align*}
          \sum_{1\leq j<k\leq \floor*{R^a}} \frac{1}{|k-j|^{1/3}} \lesssim R^{\frac{5a}{3}}.
          \end{align*}
          Hence, we obtain that
          \begin{align*}
          &\P\left\{\max_{1\leq j\leq \floor*{R^a}}\log U(t\,,x_j) \leq (\beta\log R)^{2/3}\right\} -
          \prod_{j=1}^{\floor*{R^a}}\P\left\{\log U(t\,,x_j) \leq (\beta\log R)^{2/3}\right\}\lesssim R^{\frac13-2a},
          \end{align*}
          which together with \eqref{eq:prob1} implies that
          \begin{align*}
          \P\left\{\max_{0\leq x\leq 2R}\log U(t\,,x) \leq (\beta\log R)^{2/3}\right\}&\lesssim R^{\frac13-2a}+
          \left(1-\P\left\{\log U(t\,,0) > (\beta\log R)^{2/3}\right\}\right)^{\floor*{R^a}}\\
          &\leq R^{\frac13-2a}+\left(1- \e^{-(\frac{4}{3}\sqrt{\frac2t} +\epsilon)\beta\log R} \right)^{\floor*{R^a}}\\
          &\leq R^{\frac13-2a}+ \e^{-\frac12 R^{a-\frac{4}{3}(\sqrt{\frac2t} +\epsilon)\beta}},
          \end{align*}
          where the second inequality is due to \eqref{eq:tail2} and the third inequality follows from the fact 
          that $1-x\leq \e^{-x}$ for all $x\geq0$.
          Since $\frac13-2a<0$ and $a-\frac{4}{3}(\sqrt{\frac2t} +\epsilon)\beta>0$ by \eqref{exponent1},
          letting $R=2^n$, we obtain that
          \begin{align*}
          \sum_{n=1}^\infty\P\left\{\max_{0\leq x\leq 2^{n+1}}\log U(t\,,x) \leq (\beta\log 2^{n})^{2/3}\right\} <\infty. 
          \end{align*}
          Hence, by Borel-Cantelli's lemma, we have almost surely,
          \begin{align*}
          \liminf_{n\to\infty} \frac{\max_{0\leq x\leq 2^{n+1}}\log U(t\,,x)}{(\log 2^n)^{2/3}} \geq \beta^{2/3}.
          \end{align*}
          Since the quantity $\max_{0\leq x\leq 2R}\log U(t\,,x)$ is monotone in $R$, we conclude that almost surely,
          \begin{align*}
          \liminf_{R\to\infty} \frac{\max_{0\leq x\leq 2R}\log U(t\,,x)}{(\log R)^{2/3}} \geq \beta^{2/3}.
          \end{align*}
          Letting $\beta\uparrow \frac18\sqrt{\frac{t}{2}}$, it follows that almost surely
          \begin{align*}
          \liminf_{R\to\infty} \frac{\max_{0\leq x\leq 2R}\log U(t\,,x)}{(\log R)^{2/3}} \geq \frac14\left(\frac{t}2\right)^{1/3},
          \end{align*}
          which is equivalent to \eqref{eq:lower} by stationarity.

         The proof is complete.
\end{proof}

\noindent\textbf{Acknowledgement}.  Research supported in part by National Natural Science Foundation of China (No. 12201047), Beijing Natural Science Foundation (No. 1232010) and 
National Key R\&D Program of China (No. 2022YFA 1006500).

 \bigskip

 \begin{small}
\noindent\textbf{Fei Pu}
Laboratory of Mathematics and Complex Systems,
School of Mathematical Sciences, Beijing Normal University, 100875, Beijing, China.\\
Email: \texttt{fei.pu@bnu.edu.cn}\\
\end{small}

\end{document}